\documentclass[11pt]{article}

\usepackage{amsmath,amssymb,amsfonts,amsthm}
\usepackage{hyperref}

\usepackage[font={small,sl}]{caption}
\newtheorem{theorem}{Theorem}[section]
\newtheorem{lemma}[theorem]{Lemma}
\newtheorem{corollary}[theorem]{Corollary}
\theoremstyle{definition}

\newtheorem{proposition}[theorem]{Proposition}

\theoremstyle{remark}

\numberwithin{equation}{section}

\makeatletter
\def\blfootnote{\xdef\@thefnmark{}\@footnotetext}
\newcommand*{\transpose}{%
  {\mathpalette\@transpose{}}%
}
\newcommand*{\@transpose}[2]{%
  \raisebox{\depth}{$\m@th#1\intercal$}%
}
\makeatother

\newcommand{\C}{\mathbb{C}}
\newcommand{\R}{\mathbb{R}}

\hypersetup{
  hidelinks,
  pdftitle={Quantum entropy and the Bannai--Ito multiplicity conjecture},
  pdfauthor={Gary Greaves, Haoran Zhu},
  pdfkeywords={association schemes, Q-polynomial ordering, multiplicities, log-concavity, Krein parameters}
}

\DeclareMathOperator{\Symm}{Sym}
\DeclareMathOperator{\Span}{span}
\DeclareMathOperator{\Tr}{Tr}
\DeclareMathOperator{\rank}{rank}

\DeclareMathOperator{\Mat}{Mat}

\newcommand{\cA}{\mathcal A}
\newcommand{\XX}{\mathfrak X}

\usepackage{mathtools}
\usepackage[all]{xy}
\usepackage{authblk}

\begin{document}

\title{Quantum entropy and the \\ Bannai--Ito multiplicity conjecture}

\author[1]{Gary Greaves}
\author[1]{Haoran Zhu}

\affil[1]{%
  Division of Mathematical Sciences,
  School of Physical and Mathematical Sciences,
  Nanyang Technological University,
  21 Nanyang Link, Singapore 637371\\
  \href{mailto:gary@ntu.edu.sg}{\texttt{gary@ntu.edu.sg}}
  \qquad
  \href{mailto:zhuh0031@e.ntu.edu.sg}{\texttt{zhuh0031@e.ntu.edu.sg}}
}



\date{}


\maketitle

\begin{abstract}
We prove that the multiplicities of every symmetric $Q$-polynomial association scheme form a log-concave sequence.
This resolves the 1984 unimodality conjecture of Bannai and Ito and establishes both related inequalities conjectured by Stanton.
Our proof is a novel application of strong subadditivity and weak monotonicity inequalities for quantum entropy.
\end{abstract}

\section{Introduction}

A finite connected graph is \textbf{distance-regular} if, for any vertices $x,y$, the number of vertices at distance $i$ from $x$ and distance $j$ from $y$ depends only on $i,j$ and the distance
between $x$ and $y$.
Distance-regular graphs arise naturally in coding theory, design theory, and finite geometry.
Classical families include the Hamming, Johnson, Grassmann, and dual polar graphs; see the survey of van Dam, Koolen,
and Tanaka~\cite{vanDamKoolenTanaka}.

A \textbf{symmetric association scheme} can be viewed as an abstraction of a distance-regular graph by replacing distance classes with a partition of the pairs of a finite set into symmetric relations, with equality as one relation, while retaining the same uniform intersection-counting property.
Each relation is represented by a $\{0,1\}$-matrix $A_i$.
For a distance-regular graph of diameter $d$, these are the distance matrices $A_0,A_1,\ldots,A_d$, where $[A_i]_{x,y}=1$
precisely when $x$ and $y$ are at distance $i$.
Distance-regular graphs can be characterised as symmetric association schemes whose relation matrices can be ordered such that $A_i$ is a polynomial of degree $i$ in the adjacency matrix $A_1$.
This characteristic of distance-regular graphs among symmetric association schemes is called the \textbf{$P$-polynomial property}.

The relation matrices span the \textbf{Bose--Mesner algebra}, which is closed under both ordinary matrix multiplication and entrywise multiplication.
Since the relation matrices are real symmetric and commute, they have an orthogonal decomposition into common eigenspaces.
The projections onto these eigenspaces, called the \textbf{primitive idempotents}, form another basis
$E_0,E_1,\ldots,E_d$ of the same algebra.
The \textbf{$Q$-polynomial property} is obtained by exchanging the two bases and the two products, that is, in a suitable ordering, each $E_i$ can be expressed as a polynomial of degree $i$ in $E_1$ with ordinary multiplication replaced by entrywise multiplication.
In this sense, $Q$-polynomial association schemes are the algebraic dual counterparts of distance-regular graphs; see \cite{BannaiIto,BrouwerCohenNeumaier,Delsarte,MartinTanaka}.

Under this duality, the valencies $k_i$, which count vertices in the distance layers of a distance-regular graph,
correspond to the multiplicities $m_i=\rank(E_i)$, which measure the dimensions of the common eigenspaces.
This correspondence motivates the question of whether the multiplicities satisfy the same inequalities as the valencies.

For a distance-regular graph, Taylor and Levingston proved that the valencies are unimodal and that, moreover, for each $i \in \{0,\dots,\lceil d/2\rceil-1\}$,
\[
 k_i\leqslant k_{i+1}
 \quad\text{and}\quad
 k_i\leqslant k_{d-i};
\]
see \cite{TaylorLevingston} and \cite[Theorem~1.1]{SaganCaughman}. 
Writing $b_i,c_i$ for the usual distance-regular graph intersection numbers, the recurrence $k_ib_i=k_{i+1}c_{i+1}$, together with $b_{i-1}\geqslant b_i$ and $c_i\leqslant c_{i+1}$, implies
\[
 k_i^2\geqslant k_{i-1}k_{i+1}.
\]
Thus, the valencies form a log-concave sequence.

The dual problem concerns the multiplicities of a \textbf{$Q$-polynomial association scheme}. Let $E_0,E_1,\ldots,E_d$ be a \textbf{$Q$-polynomial ordering} of the primitive idempotents, and write $m_i=\rank(E_i)$. 
Bannai and Ito conjectured that the sequence $m_0,m_1,\ldots,m_d$ is unimodal \cite[p.~205]{BannaiIto}; see also \cite[Conjecture~1.2]{SaganCaughman}. 
Stanton conjectured \cite[Conjecture~1.3]{SaganCaughman}  that, for each $i \in \{0,\dots,\lceil d/2\rceil-1\}$,
\[
 m_i\leqslant m_{i+1}
 \quad\text{and}\quad
 m_i\leqslant m_{d-i}.
\]
Sagan and Caughman proved these inequalities for dual-thin schemes \cite[Theorem~1.4]{SaganCaughman}. Pascasio proved them for $Q$-polynomial distance-regular graphs \cite[p.~1073]{Pascasio}, using the symmetry and unimodality of tridiagonal pairs \cite[Corollaries~5.7 and~6.6]{ItoTanabeTerwilliger}. Log-concavity also follows at once for $Q$-polynomial translation schemes, since their multiplicities are the valencies of the dual $P$-polynomial schemes; see \cite[Section~2.10]{BrouwerCohenNeumaier}. Martin and Tanaka record the two problems in \cite[Conjectures~4.3 and~4.4]{MartinTanaka}.
 
More recently, Shi, Wang and Sol\'e proved log-concavity under a monotonicity assumption on the Krein array \cite[Theorem~5]{ShiWangSole}. 
They also established it directly for the Johnson and Grassmann schemes, the folded Johnson graphs, and the Odd graphs \cite[Theorems~6--7 and Corollaries~6--7]{ShiWangSole}.

Our main result is the following.

\begin{theorem}\label{thm:main}
Let $(\XX,\mathfrak R)$ be a symmetric $d$-class association scheme, and let $E_0,E_1,\ldots,E_d$ be a $Q$-polynomial ordering of its primitive idempotents. 
For $0\leqslant i\leqslant d$, write $m_i=\rank(E_i)$. 
Then, for all non-negative integers $a,b,c$ with $a+b+c\leqslant d$,
\begin{align}
 m_{a+b}m_{b+c}&\geqslant m_bm_{a+b+c}, \label{eq:intro-ssa}\\
 m_{a+b}m_{a+c}&\geqslant m_bm_c. \label{eq:intro-wm}
\end{align}
\end{theorem}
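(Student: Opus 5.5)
The plan is to realise the multiplicities as exponentials of von Neumann entropies of the marginals of one multipartite quantum state. Then \eqref{eq:intro-ssa} becomes strong subadditivity and \eqref{eq:intro-wm} becomes weak monotonicity. Recall the two inequalities for a state on $A\otimes B\otimes C$:
\[
S(AB)+S(BC)\geqslant S(B)+S(ABC), \qquad S(AB)+S(AC)\geqslant S(B)+S(C).
\]
Suppose we have a state on $n=a+b+c$ tensor factors, grouped into blocks of sizes $a$, $b$, $c$, such that the marginal on any $k$ factors has entropy exactly $\log m_k$. Then exponentiating the first inequality gives \eqref{eq:intro-ssa}, and exponentiating the second gives \eqref{eq:intro-wm}. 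So everything reduces to constructing such a state with flat marginals; for this it suffices that every $k$-factor marginal be maximally mixed on a space of dimension $m_k$.

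\textbf{First attempt.} Let $H=\operatorname{col}(E_1)\subseteq\C^{\XX}$, put $u_x=E_1e_x$, and take
\[
\rho_n=\frac{1}{N}\sum_{x\in\XX}\bigl(u_x^{\otimes n}\bigr)\bigl(u_x^{\otimes n}\bigr)^{*}
\]
on $H^{\otimes n}$, with $N$ a normalising constant. Since $\langle u_x,u_x\rangle=m_1/|\XX|$ is independent of $x$, tracing out $n-k$ factors gives $\rho_k$ up to a scalar. The nonzero spectrum of $\rho_k$ is that of the Gram matrix $\bigl(\langle u_x,u_y\rangle^k\bigr)_{x,y}$, which is proportional to the entrywise power $E_1^{\circ k}$. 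By the $Q$-polynomial property and the Krein conditions,
\[
E_1^{\circ k}=\sum_{j\leqslant k}\gamma_{k,j}E_j \qquad\text{with } \gamma_{k,j}\geqslant 0 \text{ and } \gamma_{k,k}>0.
\]
This gives the right support but not a flat spectrum: its rank is $m_0+\dots+m_k$, not $m_k$.

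\textbf{Main step: filtering out lower degrees.} I would replace $u_x^{\otimes n}$ by the component of degree exactly $n$. Define $\psi_x$ as the projection of $u_x^{\otimes n}$ onto the orthogonal complement of everything generated by fewer than $n$ ``independent'' factors. Concretely, I would build $\psi_x$ inductively in $n$, using the three-term Krein relation
\[
E_1\circ E_k=q^{k-1}_{1k}E_{k-1}+q^{k}_{1k}E_k+q^{k+1}_{1k}E_{k+1}, \qquad q^{k+1}_{1k}>0.
\]
The target is that, for every subset $S$ of $k$ factors, the Gram matrix of the $S$-reductions of the $\psi_x$ is a positive multiple of $E_k$. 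The $S$-reduction is defined by contracting the remaining factors, which must contribute a constant independent of $x,y$. Once this holds, the marginal on $S$ has the same nonzero spectrum as a multiple of $E_k$. It is therefore maximally mixed of rank $m_k$, so its entropy is $\log m_k$, which is what the reduction requires.

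\textbf{Expected obstacle.} The hard part is the previous step: arranging that contracting $n-k$ factors of the degree-$n$ component lands exactly in the degree-$k$ component and produces no lower-degree leakage. This is a dual analogue of the statement that distance-$n$ walks restrict to distance-$k$ ones. I would first try an antisymmetrisation or symmetrisation over the factors combined with the polynomials $v_k^*$ (with $E_k=v_k^*\circ(E_1)$), checking the partial-trace identity via the Krein recurrence.

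\textbf{Fallback.} If exact flatness fails, I would use a weaker route. Keep the filtered states, and pass to $N$ i.i.d.\ copies together with typical-subspace projections. This makes each $k$-marginal asymptotically flat with entropy $N\log m_k+o(N)$, and taking $N\to\infty$ in SSA and WM yields the inequalities.
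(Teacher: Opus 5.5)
Your top-level reduction is the same as the paper's: build a state on $a+b+c$ factors whose $k$-factor marginals are all maximally mixed of rank $m_k$, then apply strong subadditivity and weak monotonicity. The gap is that you never produce this state. ``Build $\psi_x$ inductively'' and ``try (anti)symmetrisation with $v_k^*$'' are not constructions. The no-leakage property you list as the expected obstacle is the whole substance of the proof, and without it the entropies are not known to equal $\log m_k$.

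Your formulation of the target also points the wrong way. Once $\psi_x$ is not a product vector, the marginal does not come from per-$x$ contractions in which the traced factors contribute a constant. In the construction that works, the traced factor contributes the nonconstant kernel $(E_1)_{uw}$, and this is exactly what does the filtering.

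The fallback does not help either. Typical-subspace projections on i.i.d.\ copies give marginal entropies $N\,S(\rho_S)+o(N)$, not $N\log m_k$. In strong subadditivity the terms $S(B),S(ABC)$ need lower bounds of the form $\geqslant\log m$, and in weak monotonicity so do $S(B),S(C)$. Only exact flatness supplies these; a rank bound gives only the upper bounds needed on the other side.

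The missing idea is to filter in the label index rather than inside $H^{\otimes n}$. With $G_n:=E_1^{\otimes n}D_n$, so that $G_ne_x=u_x^{\otimes n}$, set
\[
\psi_z:=G_nE_ne_z=\sum_u(E_n)_{uz}\,u_u^{\otimes n}.
\]
Since $G_n^{\top}G_n=E_1^{\circ n}=\sum_{k\leqslant n}c^{(n)}_kE_k$, the spaces $\operatorname{col}(G_nE_k)$ are mutually orthogonal. So $\psi_z$ is precisely the component of $u_z^{\otimes n}$ orthogonal to the lower-degree pieces $\operatorname{col}(G_nE_k)$, $k<n$, which is what you were after. Moreover $\sum_z\psi_z\psi_z^{\top}=F_nF_n^{\top}$, with $F_n=G_nE_n$, is $c^{(n)}_n>0$ times a projection of rank $m_n$; this is the paper's $\mathsf P_n$.

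Tracing out one factor and using $E_1^2=E_1$ and $E_n^2=E_n$ gives
\[
\Tr_1(F_nF_n^{\top})=G_{n-1}(E_1\circ E_n)G_{n-1}^{\top}.
\]
By tridiagonality, $E_1\circ E_n$ involves only $E_{n-1},E_n,E_{n+1}$. Meanwhile $G_{n-1}E_k=0$ for $k\geqslant n$, because $(G_{n-1}E_k)^{\top}G_{n-1}E_k=c^{(n-1)}_kE_k=0$. So only the $E_{n-1}$ term survives, and the result is a positive multiple of $F_{n-1}F_{n-1}^{\top}$. This is your ``no lower-degree leakage'', and it follows in one line from the Krein triangle property.

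Each $\psi_z$ is invariant under permuting factors, so any set of factors can be traced out. Iterating shows every $k$-factor marginal is a positive multiple of a rank-$m_k$ projection, which is all the entropy argument needs.
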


Theorem~\ref{thm:main} extends Pascasio's multiplicity inequalities \cite{Pascasio} to all symmetric $Q$-polynomial association schemes and subsumes the multiplicity log-concavity results of Shi, Wang and Sol\'e \cite{ShiWangSole}, without any monotonicity assumption on the Krein array.

The technique of our proof of Theorem~\ref{thm:main} is to construct, for each $0\leqslant t\leqslant d$, a real symmetric idempotent matrix $\mathsf P_t$ of rank $m_t$. 
The normalised matrices are positive semidefinite matrices of trace one, and taking a partial trace produces an earlier member of the family. 
The two inequalities in Theorem~\ref{thm:main} then follow from strong subadditivity and weak monotonicity of quantum entropy~\cite[(3.1) and (3.2)]{Pippenger}.

Taking $a=c=1$ and $b=i-1$ in \eqref{eq:intro-ssa} shows that the multiplicities form a log-concave sequence.
Since each $m_i$ is positive, the sequence $m_0,m_1,\ldots,m_d$ is unimodal; whence we obtain the Bannai--Ito multiplicity conjecture.

\begin{corollary}[Bannai--Ito multiplicity conjecture]\label{cor:bannai-ito}
Let $E_0,E_1,\ldots,E_d$ be a $Q$-polynomial ordering of the primitive idempotents of a symmetric association scheme, and write $m_i=\rank(E_i)$ for $0\leqslant i\leqslant d$. Then the sequence $m_0,m_1,\ldots,m_d$ is unimodal.
\end{corollary}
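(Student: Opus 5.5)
The statement to prove is Corollary~\ref{cor:bannai-ito}, which I will deduce directly from Theorem~\ref{thm:main}. First I specialise \eqref{eq:intro-ssa} to obtain log-concavity. For $1\leqslant i\leqslant d-1$, take $a=c=1$ and $b=i-1$. These are non-negative integers with $a+b+c=i+1\leqslant d$, so \eqref{eq:intro-ssa} applies. It gives $m_i m_i\geqslant m_{i-1}m_{i+1}$, that is, $m_i^2\geqslant m_{i-1}m_{i+1}$ for every interior index $i$.

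Next I use positivity. Each $E_i$ is a nonzero idempotent, since it is a primitive idempotent. Hence $m_i=\rank(E_i)\geqslant 1>0$ for all $i$. I can therefore divide, and log-concavity becomes
\[
 \frac{m_i}{m_{i-1}}\geqslant\frac{m_{i+1}}{m_i}\qquad(1\leqslant i\leqslant d-1).
\]
So the ratios $r_i=m_i/m_{i-1}$, for $1\leqslant i\leqslant d$, form a non-increasing sequence.

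Finally I convert the monotone ratios into unimodality. Let $j$ be the largest index in $\{0,1,\ldots,d\}$ such that $r_i\geqslant 1$ for all $1\leqslant i\leqslant j$; in particular $j=0$ if $r_1<1$. Because the ratios are non-increasing, $r_i\geqslant1$ for every $i\leqslant j$, and $r_i<1$ for every $i>j$. Therefore $m_0\leqslant m_1\leqslant\cdots\leqslant m_j$ and $m_j>m_{j+1}>\cdots>m_d$, which is exactly unimodality.

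The substantive input is Theorem~\ref{thm:main} itself, which we may assume. Within the deduction, the only point that needs care is positivity of every $m_i$. Without it, log-concavity alone would not rule out an internal zero that breaks unimodality, and positivity is immediate here.
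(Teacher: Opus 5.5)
Your proposal is correct and follows the paper's argument exactly: you specialise \eqref{eq:intro-ssa} with $a=c=1$ and $b=i-1$ to get $m_i^2\geqslant m_{i-1}m_{i+1}$, then use positivity of the $m_i$ to pass from log-concavity to unimodality. Your ratio argument just spells out the standard step the paper leaves implicit; one small wording fix is that $r_i\geqslant 1$ for $i\leqslant j$ holds by the choice of $j$, while the monotonicity of the ratios is what gives $r_i<1$ for $i>j$.
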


Taking $a=j-i$ and $b=c=i$ in \eqref{eq:intro-wm} reduces to $m_i\leqslant m_j$ whenever $0\leqslant i\leqslant j$ and $i+j\leqslant d$.
This proves Stanton’s conjecture.

\begin{corollary}[Stanton's conjecture]\label{cor:stanton}
Let $E_0,E_1,\ldots,E_d$ be a $Q$-polynomial ordering of the primitive idempotents of a symmetric association scheme, and write $m_i=\rank(E_i)$ for $0\leqslant i\leqslant d$. Then, for each $i \in \{0,\dots,\lceil d/2\rceil-1\}$, we have
\[
 m_i\leqslant m_{i+1}
 \quad\text{and}\quad
 m_i\leqslant m_{d-i}.
\]
\end{corollary}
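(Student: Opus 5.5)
The plan is to deduce both inequalities directly from \eqref{eq:intro-wm} of Theorem~\ref{thm:main}, with the substitution already indicated before the statement. First I would prove the general claim: if $0\leqslant i\leqslant j$ and $i+j\leqslant d$, then $m_i\leqslant m_j$. Put $a=j-i$, $b=i$, $c=i$. These are non-negative integers, and $a+b+c=i+j\leqslant d$, so Theorem~\ref{thm:main} applies. Inequality \eqref{eq:intro-wm} then reads $m_j m_j\geqslant m_i m_i$. Each $m_t=\rank(E_t)$ is a positive integer, since $E_t$ is a nonzero primitive idempotent. So taking square roots gives $m_j\geqslant m_i$.

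Next I would check that both pairs in Stanton's conjecture fall in this range. Fix $i\in\{0,\dots,\lceil d/2\rceil-1\}$. Since $2\lceil d/2\rceil\leqslant d+1$, we have $2i\leqslant 2\lceil d/2\rceil-2\leqslant d-1$.
\begin{itemize}
\item For $j=i+1$: clearly $i\leqslant j$, and $i+j=2i+1\leqslant d$. The claim gives $m_i\leqslant m_{i+1}$.
\item For $j=d-i$: we have $i\leqslant d-i$ because $2i\leqslant d$, and $i+j=d$. The claim gives $m_i\leqslant m_{d-i}$.
\end{itemize}

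The argument has no real obstacle once Theorem~\ref{thm:main} is available. The only care needed is the index bookkeeping: the constraint $a+b+c\leqslant d$ must hold at the endpoint $i=\lceil d/2\rceil-1$, for both even and odd $d$, and the bound $2i\leqslant d-1$ covers both cases uniformly.
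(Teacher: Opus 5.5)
Your proposal is correct and is exactly the paper's argument: you substitute $a=j-i$ and $b=c=i$ into \eqref{eq:intro-wm} to get $m_i\leqslant m_j$ whenever $0\leqslant i\leqslant j$ and $i+j\leqslant d$. You then apply this with $j=i+1$ and $j=d-i$. Your explicit index check ($2i\leqslant d-1$ for every $i\leqslant\lceil d/2\rceil-1$) and the positivity of the $m_t$ for taking square roots are the only details the paper leaves implicit.
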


Theorem~\ref{thm:main} also provides a simple nonexistence criterion for proposed parameters of $Q$-polynomial association schemes.
In particular, it gives short proofs of six known nonexistence results recorded in Williford's table of four-class $Q$-bipartite association schemes~\cite{WillifordQbipartite4} (see also \cite{GavrilyukVidaliWilliford}).
These are the parameter sets indexed by
\[
(v,m_1)\in
\{(594,9),(7776,27),(8432,31),(8478,27),
  (9984,24),(9984,32)\},
\]
whose nonexistence is attributed to Kodalen and Martin~\cite[p.~55]{Kodalen}.
In every case, the proposed multiplicities satisfy
$m_2^2<m_1m_3$, which violates
\eqref{eq:intro-ssa} with $a=b=c=1$.

The paper is organised as follows. 
In Section~\ref{sec:schemes}, we recall relevant facts about $Q$-polynomial association schemes and their Krein parameters. 
In Section~\ref{sec:ranks}, for each $t \in \{0,\dots,d\}$, we define projection matrices $\mathsf P_t$ having rank $m_t$.
We show that these matrices satisfy a telescoping partial-trace relation.
Finally, we apply the quantum entropy inequalities of Lieb and Ruskai~\cite{LiebRuskai} to prove Theorem~\ref{thm:main}.

\section{\texorpdfstring{$Q$-polynomial}{Q-polynomial} association schemes}\label{sec:schemes}

We recall the notation and standard facts about $Q$-polynomial association schemes used below; for the standard theory, see \cite{BannaiIto,BrouwerCohenNeumaier,Delsarte,MartinTanaka}.

Let $\XX$ be a finite set of cardinality $v$, and let $\mathfrak R=\{R_0,R_1,\ldots,R_d\}$ be a \textbf{partition} of $\XX\times\XX$. Suppose that $R_0$ is the diagonal relation, each $R_i$ is symmetric, and, whenever $(x,y)\in R_k$, the number of points $z$ such that $(x,z)\in R_i$ and $(z,y)\in R_j$ depends only on $i,j,k$. Then $(\XX,\mathfrak R)$ is a \textbf{symmetric association scheme} with $d$ classes.

For finite sets $\mathfrak X$ and $\mathfrak Y$ and a field $\mathbb F$, we write $\Mat_{\mathfrak X,\mathfrak Y}(\mathbb F)$ for the set of matrices over $\mathbb F$ whose rows are indexed by $\mathfrak X$ and whose columns are indexed by $\mathfrak Y$. We abbreviate $\Mat_{\mathfrak X}(\mathbb F)=\Mat_{\mathfrak X,\mathfrak X}(\mathbb F)$.

For $0\leqslant i\leqslant d$, let $A_i\in\Mat_{\XX}(\R)$ be the $0$-$1$ adjacency matrix of $R_i$. The \textbf{Bose--Mesner algebra} of $(\XX,\mathfrak R)$ is $\cA=\Span_{\C}\{A_0,A_1,\ldots,A_d\}\subseteq\Mat_{\XX}(\C)$, a commutative matrix algebra with real symmetric standard basis $A_0,A_1,\ldots,A_d$; in particular, it contains the identity matrix $I=A_0$ and the all-ones matrix $J=\sum_{i=0}^dA_i$. Let $E_0,E_1,\ldots,E_d$ be its primitive idempotents, ordered so that $E_0=v^{-1}J$. Then
\[
 E_iE_j=\delta_{ij}E_i
 \quad\text{and}\quad
 \sum_{i=0}^dE_i=I.
\]
Here $\delta_{ij}$ is the Kronecker delta. In particular, the $E_i$ are non-zero real symmetric projection matrices: $E_i^\transpose=E_i^2=E_i$. We write $m_i=\rank(E_i)$ for the \textbf{multiplicity} of $E_i$; thus $m_i$ is positive, every diagonal entry of $E_i$ is equal to $m_i/v$, and $m_0=1$.

The algebra $\cA$ is also closed under the entrywise product $\circ$, defined by $(M\circ N)_{xy}=M_{xy}N_{xy}$. 
For each $i,j \in \{0,\dots,d\}$, we can write
\begin{equation}\label{eq:Krein-expansion}
 E_i\circ E_j=\frac{1}{v}\sum_{k=0}^d q_{ij}^kE_k.
\end{equation}
The coefficients $q_{ij}^k$ are called the \textbf{Krein parameters}.
It is well known that the Krein parameters are non-negative real numbers
\cite{MartinTanaka}.

For later use, we record the following standard identities for the Krein
parameters.
\begin{proposition}[{\cite[Lemma~2.3.1(iv)]{BrouwerCohenNeumaier},
  \cite[Theorem~11.4.1(i),(iii)]{BrouwerHaemers}}]
  \label{prop:Krein}
Let $0\leqslant i,j,k\leqslant d$. Then
\[
 q_{ij}^k=q_{ji}^k
 \qquad\text{and}\qquad
 q_{ij}^k m_k=q_{ik}^j m_j.
\]
\end{proposition}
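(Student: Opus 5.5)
The plan is to prove both identities by computing one fully symmetric quantity in two ways.

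The first identity needs no computation. The entrywise product is commutative, so $E_i\circ E_j=E_j\circ E_i$. Since $E_0,\dots,E_d$ is a basis of $\cA$, the expansion \eqref{eq:Krein-expansion} has unique coefficients. Comparing the expansions of the two sides gives $q_{ij}^k=q_{ji}^k$.

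For the second identity, I will use the triple sum
\[
 S_{ijk}=\sum_{x,y\in\XX}(E_i)_{xy}(E_j)_{xy}(E_k)_{xy}.
\]
This is manifestly invariant under every permutation of $(i,j,k)$. The key step is to rewrite it as a trace. For real matrices $M,N$ we have $\sum_{x,y}M_{xy}N_{xy}=\Tr(MN^\transpose)$. Applying this with $M=E_i\circ E_j$ and $N=E_k$, and using $E_k^\transpose=E_k$, gives
\[
 S_{ijk}=\Tr\bigl((E_i\circ E_j)E_k\bigr).
\]
Substituting \eqref{eq:Krein-expansion} and using $E_lE_k=\delta_{lk}E_k$ leaves only the $l=k$ term:
\[
 S_{ijk}=\frac1v\,q_{ij}^k\Tr(E_k)=\frac{q_{ij}^k m_k}{v}.
\]
Here $\Tr(E_k)=\rank(E_k)=m_k$ because $E_k$ is a real symmetric projection.

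The same computation with the roles of $j$ and $k$ exchanged gives $S_{ikj}=q_{ik}^j m_j/v$. Since $S_{ijk}=S_{ikj}$, we conclude $q_{ij}^k m_k=q_{ik}^j m_j$. The argument is routine, and the only point needing care is the symmetry of $E_k$, which lets the entrywise inner product be written as the trace of an ordinary product.
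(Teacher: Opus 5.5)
Your proof is correct. The paper gives no proof of its own and only cites Brouwer--Cohen--Neumaier and Brouwer--Haemers, and your argument is the standard one behind those citations: uniqueness of coefficients in the basis $E_0,\dots,E_d$ gives the symmetry, and the fully symmetric quantity $\sum_{x,y}(E_i)_{xy}(E_j)_{xy}(E_k)_{xy}=\Tr\bigl((E_i\circ E_j)E_k\bigr)=v^{-1}q_{ij}^k m_k$ gives the second identity.
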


An ordering $E_0,E_1,\ldots,E_d$ is called \textbf{$Q$-polynomial} if the matrix
\[
 (q_{1j}^k)_{0\leqslant j,k\leqslant d}
\]
is tridiagonal and irreducible; see \cite[Section~4]{MartinTanaka}.
Equivalently, $q_{1j}^k=0$ when $|j-k|>1$, and $q_{1j}^{j-1}$ and $q_{1j}^{j+1}$ are positive whenever the indicated indices lie between $0$ and $d$.

We shall use the following standard triangle property repeatedly.
\begin{proposition}[{\cite[Theorem~5.16]{Delsarte}}]
  \label{prop:triangle}
Let $E_0,E_1,\ldots,E_d$ be a $Q$-polynomial ordering, and let $0\leqslant i,j,k\leqslant d$. 
Then
\[
 q_{ij}^k=0
 \qquad\text{unless}\qquad
 |i-j|\leqslant k\leqslant i+j.
\]
Moreover, $q_{ij}^{i+j}>0$ whenever $i+j\leqslant d$.
\end{proposition}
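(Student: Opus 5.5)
The plan is to use the $Q$-polynomial structure to express each $E_i$ as an entrywise polynomial of degree $i$ in $E_1$. The upper bound and the positivity then come from degree counting under $\circ$. The lower bound comes from the upper bound combined with Proposition~\ref{prop:Krein}.

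\textbf{Step 1 (filtration by entrywise powers).} Write $E_1^{\circ 0}=J=vE_0$ and $E_1^{\circ r}=E_1\circ E_1^{\circ(r-1)}$. By induction on $r$, I will show that for $0\leqslant r\leqslant d$
\[
 E_1^{\circ r}=\beta_rE_r+(\text{a combination of }E_0,\ldots,E_{r-1}),
 \qquad \beta_r=v\prod_{s=0}^{r-1}\frac{q_{1s}^{s+1}}{v}>0 .
\]
For $r>d$, $E_1^{\circ r}$ simply lies in $\cA=\Span\{E_0,\ldots,E_d\}$. The inductive step applies \eqref{eq:Krein-expansion} to each term $E_1\circ E_s$ with $s\leqslant r-1$. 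Tridiagonality gives $E_1\circ E_s\in\Span\{E_{s-1},E_s,E_{s+1}\}$, and the coefficient of $E_{s+1}$ is $q_{1s}^{s+1}/v$, which is positive by irreducibility.

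Since $\beta_r\neq 0$, inverting this triangular system gives
\[
 E_i=\beta_i^{-1}E_1^{\circ i}+(\text{a combination of }E_1^{\circ r},\ r<i).
\]
Consequently, for $r\leqslant d$,
\[
 \Span\{E_0,\ldots,E_r\}=\Span\{E_1^{\circ 0},\ldots,E_1^{\circ r}\}=:\mathcal F_r .
\]
When $r>d$, set $\mathcal F_r=\cA$.

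\textbf{Step 2 (upper bound and positivity).} Because $E_1^{\circ r}\circ E_1^{\circ s}=E_1^{\circ(r+s)}$, we have $\mathcal F_r\circ\mathcal F_s\subseteq\mathcal F_{r+s}$. Hence $E_i\circ E_j\in\mathcal F_{i+j}$, so $q_{ij}^k=0$ for $k>i+j$; this is vacuous when $i+j\geqslant d$.

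When $i+j\leqslant d$, expanding both factors as in Step 1 gives
\[
 E_i\circ E_j=\beta_i^{-1}\beta_j^{-1}E_1^{\circ(i+j)}+(\text{an element of }\mathcal F_{i+j-1}).
\]
The coefficient of $E_{i+j}$ on the right is therefore $\beta_{i+j}/(\beta_i\beta_j)>0$. This gives $q_{ij}^{i+j}=v\beta_{i+j}/(\beta_i\beta_j)>0$.

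\textbf{Step 3 (lower bound).} Suppose $k<|i-j|$. By the symmetry $q_{ij}^k=q_{ji}^k$ from Proposition~\ref{prop:Krein}, we may assume $j>i+k$. Proposition~\ref{prop:Krein} then gives $q_{ij}^km_k=q_{ik}^jm_j$. The right-hand side vanishes by Step 2 applied to the pair $(i,k)$, because $j>i+k$. Since $m_k>0$, it follows that $q_{ij}^k=0$.

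The main point needing care is Step 1: the leading coefficients must be tracked precisely, in particular the positivity of every $q_{1s}^{s+1}$. It must also be checked that the lower-order terms in Step 2 genuinely lie in $\mathcal F_{i+j-1}$, so that they contribute nothing to $E_{i+j}$. The rest is bookkeeping.
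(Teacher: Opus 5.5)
Your proof is correct. The paper gives no proof of this proposition; it simply cites Delsarte. Your argument is the standard one behind that citation. The $\circ$-degree filtration $\Span\{E_0,\ldots,E_r\}=\Span\{E_1^{\circ 0},\ldots,E_1^{\circ r}\}$ gives both the upper bound and the leading coefficient $q_{ij}^{i+j}=v\beta_{i+j}/(\beta_i\beta_j)>0$. The symmetry $q_{ij}^km_k=q_{ik}^jm_j$ from Proposition~\ref{prop:Krein} then turns the upper bound into the lower bound.

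Two remarks relate your argument to the rest of the paper. First, your Step~1 is the same computation as the inductive part of Corollary~\ref{cor:F-norm}. Your $\beta_r$ equals the paper's $\alpha_r=c^{(r)}_r$ for $r\geqslant 1$, since $\beta_1=q_{10}^1=1$ and both satisfy the recursion \eqref{eq:alpha-recursion}. You carry it out directly from tridiagonality and irreducibility, whereas the paper's proof of that corollary appeals to Proposition~\ref{prop:triangle}. Second, every invocation of Proposition~\ref{prop:triangle} in the paper has first index $1$. There the statement is exactly the definition of a $Q$-polynomial ordering. So your Steps~2 and~3 establish the general-$i$ case, which the paper states but never actually uses.
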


\section{Projection matrices and multiplicity inequalities}\label{sec:ranks}

In this section, we construct projection matrices whose ranks are the multiplicities and use their partial traces to prove Theorem~\ref{thm:main}. All matrices in this section have real entries, and we accordingly use the transpose throughout; $\otimes$ denotes the Kronecker product.

\subsection{Construction of the projection matrices}

We now construct the projection matrices used to prove Theorem~\ref{thm:main}. For a positive integer $t$, let $D_t\in\Mat_{\XX^t,\XX}(\R)$ be the matrix with
\[
 [D_t]_{x_1,\ldots,x_t,y}=
 \begin{cases}
  1,&\text{if }x_1=\cdots=x_t=y,\\
  0,&\text{otherwise.}
 \end{cases}
\]
Thus, $D_1=I$.

\begin{lemma}\label{lem:entrywise-product}
Let $t$ be a positive integer and let $M_1,\ldots,M_t\in\Mat_{\XX}(\R)$. Then
\[
 D_t^\transpose(M_1\otimes\cdots\otimes M_t)D_t
 =M_1\circ\cdots\circ M_t.
\]
\end{lemma}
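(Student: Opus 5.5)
The plan is to verify the identity entrywise, computing the $(y,z)$ entry of the left-hand side directly from the definition of $D_t$. The matrix $M_1\otimes\cdots\otimes M_t$ lies in $\Mat_{\XX^t}(\R)$, and its $((x_1,\ldots,x_t),(w_1,\ldots,w_t))$ entry is $\prod_{s=1}^t [M_s]_{x_s,w_s}$. Since $D_t\in\Mat_{\XX^t,\XX}(\R)$, the product $D_t^\transpose(M_1\otimes\cdots\otimes M_t)D_t$ is a square matrix indexed by $\XX$, so both sides have the same shape.

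For $y,z\in\XX$ we expand
\[
 \bigl[D_t^\transpose(M_1\otimes\cdots\otimes M_t)D_t\bigr]_{y,z}
 =\sum_{\mathbf x,\mathbf w\in\XX^t}[D_t]_{\mathbf x,y}\Bigl(\prod_{s=1}^t[M_s]_{x_s,w_s}\Bigr)[D_t]_{\mathbf w,z}.
\]
By definition, $[D_t]_{\mathbf x,y}$ vanishes unless $\mathbf x=(y,\ldots,y)$, and in that case it equals $1$. Likewise, $[D_t]_{\mathbf w,z}$ vanishes unless $\mathbf w=(z,\ldots,z)$, where it equals $1$. Hence only one term of the double sum survives, and the entry is $\prod_{s=1}^t[M_s]_{y,z}$. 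This is exactly $[M_1\circ\cdots\circ M_t]_{y,z}$, which proves the identity.

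There is no real obstacle here. The only point that needs care is bookkeeping: the ordering convention for indexing the Kronecker product by $\XX^t$ must match the indexing of the rows of $D_t$. This holds because both use the same tuple indexing.
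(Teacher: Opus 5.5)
Your proof is correct and takes essentially the same approach as the paper. You expand the double sum entrywise, while the paper notes that each column of $D_t$ is the standard basis vector indexed by a constant tuple; both reduce to reading off the Kronecker-product entry in row $(y,\ldots,y)$ and column $(z,\ldots,z)$.
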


\begin{proof}
For $x,y\in\XX$, the column of $D_t$ indexed by $x$ is the standard basis vector of $\R^{\XX^t}$ indexed by the constant tuple $(x,\ldots,x)$. Hence the $(x,y)$-entry of the left-hand side is the entry of $M_1\otimes\cdots\otimes M_t$ in row $(x,\ldots,x)$ and column $(y,\ldots,y)$, which is $\prod_{i=1}^t(M_i)_{xy}$.
\end{proof}

For $t=2$, Lemma~\ref{lem:entrywise-product} is the standard realisation of the Hadamard product as the compression of the Kronecker product to the diagonal tensor subspace; see \cite[p.~31]{Paulsen}. The general identity is its immediate multi-factor extension. In the association-scheme setting, see also \cite[p.~1501]{MartinTanaka}.

Let $(\XX,\mathfrak R)$ be a symmetric association scheme with $d$ classes, and fix a $Q$-polynomial ordering $E_0,E_1,\ldots,E_d$ of its primitive idempotents. 
Write $m_i=\rank(E_i)$.
For $1\leqslant t\leqslant d$, let $E_1^{\otimes t}$ (resp.\ $E_1^{\circ t}$) denote the $t$-fold Kronecker (resp.\ entrywise) product of $E_1$ with itself.

We now state and prove a key application of Lemma~\ref{lem:entrywise-product} to primitive idempotents of an association scheme.

\begin{corollary}\label{cor:F-norm}
Let $t\in\{1,\ldots,d\}$.
Suppose $E_1^{\circ t}=\sum_{k=0}^dc^{(t)}_kE_k$. Then $c^{(t)}_k\geqslant0$ for all $k$, and $c^{(t)}_k=0$ for $k>t$. 
Moreover, $c^{(t)}_t > 0$ and
\[
 E_tD_t^\transpose\bigl(E_1^{\otimes t}\bigr)D_tE_t=c^{(t)}_t E_t.
\]
\end{corollary}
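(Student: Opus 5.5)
The plan is to argue by induction on $t$, using the Krein expansion \eqref{eq:Krein-expansion} together with the triangle property, Proposition~\ref{prop:triangle}.

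\emph{Base case $t=1$.} Here $E_1^{\circ 1}=E_1$, so $c^{(1)}_k=\delta_{1k}$, which has the required properties.

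\emph{Inductive step.} Suppose $E_1^{\circ t}=\sum_{k\leqslant t}c^{(t)}_kE_k$ with all $c^{(t)}_k\geqslant0$ and $c^{(t)}_t>0$, and let $t+1\leqslant d$. Then
\[
 E_1^{\circ(t+1)}=E_1\circ E_1^{\circ t}=\sum_{k\leqslant t}c^{(t)}_k\,\frac1v\sum_{j}q_{1k}^jE_j .
\]
Since the $E_j$ are linearly independent, this identifies
\[
 c^{(t+1)}_j=\frac1v\sum_{k\leqslant t}c^{(t)}_kq_{1k}^j .
\]
Non-negativity of $c^{(t+1)}_j$ follows from the non-negativity of the Krein parameters and of the $c^{(t)}_k$. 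By Proposition~\ref{prop:triangle}, $q_{1k}^j=0$ unless $j\leqslant k+1\leqslant t+1$, so $c^{(t+1)}_j=0$ for $j>t+1$. For $j=t+1$, only the term $k=t$ survives, giving $c^{(t+1)}_{t+1}=v^{-1}c^{(t)}_tq_{1t}^{t+1}$. This is positive because $q_{1t}^{t+1}>0$, which holds by Proposition~\ref{prop:triangle} (equivalently, by irreducibility of the tridiagonal matrix $(q_{1j}^k)$) since $1+t\leqslant d$.

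\emph{The displayed identity.} Apply Lemma~\ref{lem:entrywise-product} with $M_1=\cdots=M_t=E_1$ to get $D_t^\transpose E_1^{\otimes t}D_t=E_1^{\circ t}=\sum_k c^{(t)}_kE_k$. Multiplying on both sides by $E_t$ and using $E_tE_kE_t=\delta_{tk}E_t$ leaves $c^{(t)}_tE_t$.

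The only delicate point is the bookkeeping of the indices in the triangle property, namely ensuring that the top coefficient comes only from $k=t$ and that it is strictly positive. Everything else is a routine computation.
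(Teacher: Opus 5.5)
Your proposal is correct and follows essentially the same argument as the paper: an induction via the recursion $c^{(t+1)}_j=\frac1v\sum_k c^{(t)}_kq_{1k}^j$ with the triangle property isolating the top coefficient, and then Lemma~\ref{lem:entrywise-product} together with orthogonality of the $E_k$ for the displayed identity. The only differences are cosmetic: you shift the induction index, and you prove the displayed identity last rather than first.
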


\begin{proof}
By Lemma~\ref{lem:entrywise-product} and the orthogonality of the primitive idempotents, we have
\[
E_tD_t^\transpose\bigl(E_1^{\otimes t}\bigr)D_tE_t
 =E_t\bigl(E_1^{\circ t}\bigr)E_t
 =c^{(t)}_tE_t.
\]

The remaining assertions follow by induction on $t$. The case $t=1$ is immediate. 
Suppose $t\geqslant2$.
Applying \eqref{eq:Krein-expansion}, we find that
\[
\sum_{k=0}^dc^{(t)}_kE_k = E_1^{\circ t}
=E_1\circ E_1^{\circ(t-1)} = \sum_{k=0}^d
  \left(\frac1v\sum_{j=0}^d c_j^{(t-1)}q_{1j}^k\right)E_k.
\]
Compare coefficients to obtain
\begin{equation}\label{eq:c-recursion}
 c_k^{(t)}
 =\frac1v\sum_{j=0}^d c_j^{(t-1)}q_{1j}^k.
\end{equation}
Since the Krein parameters $q_{1j}^k$ are nonnegative, we have $c_k^{(t)}\geqslant0$. 
By induction,
$c_j^{(t-1)}=0$ for $j>t-1$. 
By Proposition~\ref{prop:triangle}, if $j\leqslant t-1$ and $k>t$ then $q_{1j}^k=0$, which implies $c_k^{(t)}=0$.
Again, by Proposition~\ref{prop:triangle}, if $k=t$ then $q_{1j}^t=0$ for $j<t-1$ and $q_{1,t-1}^t>0$. Therefore,
\[
 c_t^{(t)}
 =\frac1v c_{t-1}^{(t-1)}q_{1,t-1}^{t}>0.
 \qedhere
\]
\end{proof}

Motivated by Corollary~\ref{cor:F-norm}, for each $t \in \{1,\dots,d\}$, we define 
\[
 \mathsf P_t
 :=\alpha_t^{-1}F_tF_t^\transpose
 \in\Mat_{\XX^t}(\R),
\]
where
\[
 F_t:=E_1^{\otimes t}D_tE_t\in\Mat_{\XX^t,\XX}(\R)
\]
and $\alpha_t \coloneq c^{(t)}_{t}$.
 
Note that $\mathsf P_1=F_1=E_1$ and $\alpha_1 = 1$.
Since
\[
 \mathsf P_t^2=\alpha_t^{-2}F_t\bigl(F_t^\transpose F_t\bigr)F_t^\transpose
 =\alpha_t^{-1}F_tE_tF_t^\transpose=\mathsf P_t,
\]
the matrix $ \mathsf P_t$ is idempotent, and hence, since it is also symmetric, it is a projection matrix.
Moreover, by Corollary~\ref{cor:F-norm},
\begin{equation}
    \label{eqn:rank}
 \rank(\mathsf P_t)=\Tr(\mathsf P_t)
 =\alpha_t^{-1}\Tr(F_t^\transpose F_t)
 =\Tr(E_t)=m_t.
\end{equation}
For $t=0$, we define $\mathsf P_0:=1\in\Mat_{\XX^0}(\R)$ (we consider $\mathfrak X^0$ to be a singleton).
By the proof of Corollary~\ref{cor:F-norm}, for each $t \in \{2,\dots,d\}$,
\begin{equation}\label{eq:alpha-recursion}
 \alpha_t
 =\frac{q_{1,t-1}^{t}}{v}\alpha_{t-1}.
\end{equation}

For a positive integer $t$, let $\Symm(t)$ act on $\XX^t$ by
\[
 \sigma(x_1,\ldots,x_t)
 :=(x_{\sigma^{-1}(1)},\ldots,x_{\sigma^{-1}(t)})
\]
for each $\sigma \in \Symm(t)$.
We say that $K\in\Mat_{\XX^t}(\R)$ is \textbf{coordinate invariant} if, for all $\sigma\in\Symm(t)$ and
$\mathbf x,\mathbf y\in\XX^t$, we have
\begin{equation}\label{eq:coordinate-permutation-invariance}
 K_{\sigma\mathbf x,\,\sigma\mathbf y}=K_{\mathbf x,\mathbf y}.
\end{equation}

\begin{lemma}\label{lem:Ptinv}
For each $t \in \{0,\dots,d\}$, the matrix $\mathsf P_t$ is coordinate invariant.
\end{lemma}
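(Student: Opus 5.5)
We prove coordinate invariance by writing the entries of $\mathsf P_t$ explicitly. The cases $t=0$ and $t=1$ are trivial: $\Symm(0)$ and $\Symm(1)$ act trivially on $\XX^0$ and $\XX^1$. So fix $t\geqslant 2$. Since $\mathsf P_t=\alpha_t^{-1}F_tF_t^\transpose$, it suffices to show that each column of $F_t$ is a coordinate-invariant vector. Here a vector $w\in\R^{\XX^t}$ is coordinate invariant if $w_{\sigma\mathbf x}=w_{\mathbf x}$ for all $\sigma$ and $\mathbf x$. Granting this, for every $\sigma$ we get
\[
 [F_tF_t^\transpose]_{\sigma\mathbf x,\sigma\mathbf y}
 =\sum_{z\in\XX}[F_t]_{\sigma\mathbf x,z}[F_t]_{\sigma\mathbf y,z}
 =\sum_{z\in\XX}[F_t]_{\mathbf x,z}[F_t]_{\mathbf y,z}
 =[F_tF_t^\transpose]_{\mathbf x,\mathbf y},
\]
which is \eqref{eq:coordinate-permutation-invariance}.

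Next we compute $F_t$ entrywise. Let $\mathbf x=(x_1,\ldots,x_t)$. The column of $D_t$ indexed by $y$ is the basis vector $e_{(y,\ldots,y)}$. Hence
\[
 [E_1^{\otimes t}D_t]_{\mathbf x,y}=\prod_{i=1}^t(E_1)_{x_i,y}.
\]
Multiplying on the right by $E_t$ gives
\[
 [F_t]_{\mathbf x,z}=\sum_{y\in\XX}\Bigl(\prod_{i=1}^t(E_1)_{x_i,y}\Bigr)(E_t)_{y,z}.
\]
The product $\prod_{i}(E_1)_{x_i,y}$ is unchanged when the $x_i$ are permuted. Replacing $\mathbf x$ by $\sigma\mathbf x=(x_{\sigma^{-1}(1)},\ldots,x_{\sigma^{-1}(t)})$ only reorders the factors, so $[F_t]_{\sigma\mathbf x,z}=[F_t]_{\mathbf x,z}$ for every $z$.

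An equivalent, more structural formulation uses the permutation matrix $\Pi_\sigma$ on $\R^{\XX^t}$ defined by $\Pi_\sigma e_{\mathbf x}=e_{\sigma\mathbf x}$. The statement to prove is then $\Pi_\sigma\mathsf P_t\Pi_\sigma^\transpose=\mathsf P_t$. This follows from three facts: $\Pi_\sigma$ commutes with $E_1^{\otimes t}$ because all tensor factors are equal, $\Pi_\sigma D_t=D_t$, and $\Pi_\sigma$ does not interact with the right-hand factor $E_t$.

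There is no real obstacle in this argument. The only point requiring care is keeping track of the direction of the action ($\sigma$ versus $\sigma^{-1}$), and this is harmless because invariance is required for all $\sigma$ simultaneously.
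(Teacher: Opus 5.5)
Your proposal is correct and follows essentially the same route as the paper: compute $[F_t]_{\mathbf x,z}=\sum_{u}\bigl(\prod_{i}[E_1]_{x_i,u}\bigr)[E_t]_{u,z}$, observe that it is symmetric in the coordinates of $\mathbf x$, and conclude invariance of $\mathsf P_t=\alpha_t^{-1}F_tF_t^\transpose$. Your permutation-matrix reformulation, $\Pi_\sigma F_t=F_t$, is a valid restatement of the same argument.
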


\begin{proof}
The case for $t = 0$ is trivial.
    For $t\geqslant1$ and $\mathbf x\in\XX^t$ and $z\in\XX$, we have
\[
 [F_t]_{\mathbf x,z}
 =\sum_{u\in\XX}\left(\prod_{i=1}^t[E_1]_{\mathbf x_i,u}\right)[E_t]_{u,z},
\]
and hence $[F_t]_{\sigma\mathbf x,z}=[F_t]_{\mathbf x,z}$ for every $\sigma\in\Symm(t)$.
Since $\mathsf P_t=\alpha_t^{-1}F_tF_t^\transpose$, it follows that $\mathsf P_t$ is coordinate invariant.
\end{proof}

\subsection{Partial traces}

Let $\mathfrak X$ and $\mathfrak Y$ be finite sets, and let
$K\in\Mat_{\mathfrak X\times\mathfrak Y}(\R)$. 
For $x,y\in\mathfrak X$, let
\[
 K_{xy}:=\bigl[K_{(x,z),(y,w)}\bigr]_{z,w\in\mathfrak Y}
 \in\Mat_{\mathfrak Y}(\R).
\]
Thus, $K=[K_{xy}]_{x,y\in\mathfrak X}$ is an
$\mathfrak X\times\mathfrak X$ block matrix whose blocks are indexed by $\mathfrak Y$. 
The \textbf{partial trace of $K$ over $\mathfrak Y$} is
\begin{equation}\label{eq:partial-trace-general}
 \Tr_{\mathfrak Y}(K)
 :=\bigl[\Tr(K_{xy})\bigr]_{x,y\in\mathfrak X}
 \in\Mat_{\mathfrak X}(\R),
\end{equation}
where $\Tr$ denotes the ordinary matrix trace. 
This is the block-matrix definition of the partial trace in \cite[Proposition~4.3.10]{Bhatia}.

We extend this notation to products of more than two sets. 
Let $n$ be a positive integer, write $[n]:=\{1,\ldots,n\}$, and let $\mathfrak X_1,\ldots,\mathfrak X_n$ be finite sets. 
For $J\subseteq[n]$, define
\[
 \mathfrak X_J:=\prod_{i\in J}\mathfrak X_i,
\]
where the factors occur in their natural order, and let $\mathfrak X_\emptyset$ be a singleton. 
We regard $K\in\Mat_{\mathfrak X_{[n]}}(\R)$ as a block matrix in $\Mat_{\mathfrak X_{[n]\setminus J}\times\mathfrak X_J}(\R)$.
We write $\Tr_J(K)$ for its partial trace over $\mathfrak X_J$.

For non-negative integers $r$ and $s$, take $n=r+s$, set $\mathfrak X_i=\mathfrak X$ for
$1\leqslant i\leqslant n$, and $J=\{r+1,\ldots,r+s\}$. 
We abbreviate $\Tr_J$ by $\Tr_s$.
Thus, for $\mathbf x,\mathbf y\in\mathfrak X^r$, we have
\begin{equation}\label{eq:partial-trace}
 \bigl[\Tr_s(K)\bigr]_{\mathbf x,\mathbf y}
 =\sum_{\mathbf z\in\mathfrak X^s}
 K_{(\mathbf x,\mathbf z),(\mathbf y,\mathbf z)}.
\end{equation}
In particular, $\Tr_0$ is the identity, while $\Tr_t$ on $\Mat_{\mathfrak X^t}(\R)$ is the ordinary matrix trace $\Tr$.
More generally, if $a,b$ are non-negative integers with $a+b\leqslant t$ and $K\in\Mat_{\mathfrak X^t}(\R)$, then
\begin{equation}\label{eq:successive-partial-traces}
 \Tr_a\bigl(\Tr_b(K)\bigr)=\Tr_{a+b}(K),
\end{equation}
where the ambient matrix spaces of the two partial traces are understood from context. Indeed, for
$\mathbf x,\mathbf y\in\mathfrak X^{t-a-b}$,
\[
 \bigl[\Tr_a(\Tr_b(K))\bigr]_{\mathbf x,\mathbf y}
 =
 \sum_{\mathbf u\in\mathfrak X^a}
 \sum_{\mathbf z\in\mathfrak X^b}
 K_{(\mathbf x,\mathbf u,\mathbf z),
   (\mathbf y,\mathbf u,\mathbf z)}
 =
 \bigl[\Tr_{a+b}(K)\bigr]_{\mathbf x,\mathbf y}.
\]

Now we can prove a key recursive telescoping relation with the partial trace and the projections $\mathsf P_t$.

\begin{theorem}\label{thm:compatible-projections}
Let $r$ and $t$ be integers with $0\leqslant r\leqslant t\leqslant d$. 
Then
\[
 \Tr_{t-r}(\mathsf P_t)=\frac{m_t}{m_r}\mathsf P_r.
\]
\end{theorem}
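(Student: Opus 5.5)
The plan is to reduce to the case $t=r+1$ and then telescope. By \eqref{eq:successive-partial-traces}, we have $\Tr_{t-r}(\mathsf P_t)=\Tr_1(\Tr_1(\cdots\Tr_1(\mathsf P_t)\cdots))$. So it suffices to prove
\[
 \Tr_1(\mathsf P_s)=\frac{m_s}{m_{s-1}}\mathsf P_{s-1}\qquad(1\leqslant s\leqslant d).
\]
The product $\prod_{s=r+1}^t m_s/m_{s-1}=m_t/m_r$ then gives the claim. The case $r=t$ is trivial. The case $s=1$ is also immediate: $\Tr_1(\mathsf P_1)=\Tr(E_1)=m_1=\frac{m_1}{m_0}\mathsf P_0$.

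For $s\geqslant2$, we split $E_1^{\otimes s}=E_1^{\otimes(s-1)}\otimes E_1$, where the last tensor factor is the one being traced. Writing out the entries via \eqref{eq:partial-trace}, for $\mathbf x,\mathbf y\in\XX^{s-1}$ we get
\[
 [\Tr_1(\mathsf P_s)]_{\mathbf x,\mathbf y}
 =\alpha_s^{-1}\sum_{u,w\in\XX}\Bigl(\prod_{i}[E_1]_{x_i,u}\Bigr)\Bigl(\sum_{z}[E_1]_{z,u}[E_1]_{z,w}\Bigr)[E_s]_{u,w}\Bigl(\prod_i[E_1]_{y_i,w}\Bigr).
\]
Since $E_1$ is a symmetric idempotent, the inner sum is $[E_1]_{u,w}$. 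Hence
\[
 \Tr_1(\mathsf P_s)=\alpha_s^{-1}\,E_1^{\otimes(s-1)}D_{s-1}\,(E_1\circ E_s)\,D_{s-1}^\transpose E_1^{\otimes(s-1)}.
\]
Next we expand $E_1\circ E_s=\frac1v\sum_k q_{1s}^kE_k$ using \eqref{eq:Krein-expansion}. By the tridiagonality of the $Q$-polynomial ordering, only $k\in\{s-1,s,s+1\}$ survive.

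The key step is to kill the terms with $k\geqslant s$. Put $X=E_1^{\otimes(s-1)}D_{s-1}$. Then $XE_kX^\transpose=(XE_k)(XE_k)^\transpose$, which vanishes iff $XE_k=0$, i.e.\ iff
\[
 E_kX^\transpose XE_k=E_kD_{s-1}^\transpose E_1^{\otimes(s-1)}D_{s-1}E_k=0.
\]
By Lemma~\ref{lem:entrywise-product}, this equals $E_kE_1^{\circ(s-1)}E_k=c^{(s-1)}_kE_k$. By Corollary~\ref{cor:F-norm}, this is zero for $k>s-1$. The only surviving term is $k=s-1$, for which $XE_{s-1}X^\transpose=F_{s-1}F_{s-1}^\transpose=\alpha_{s-1}\mathsf P_{s-1}$. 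Therefore
\[
 \Tr_1(\mathsf P_s)=\frac{\alpha_{s-1}}{\alpha_s}\cdot\frac{q_{1s}^{s-1}}{v}\,\mathsf P_{s-1}.
\]

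Finally, we identify the constant. By \eqref{eq:alpha-recursion}, $\alpha_{s-1}/\alpha_s=v/q_{1,s-1}^s$, so the coefficient is $q_{1s}^{s-1}/q_{1,s-1}^{s}$. Proposition~\ref{prop:Krein} gives $q_{1,s-1}^s m_s=q_{1s}^{s-1}m_{s-1}$. Since $q_{1,s-1}^s>0$ by Proposition~\ref{prop:triangle}, we may divide, and the coefficient equals $m_s/m_{s-1}$. The main obstacle is the vanishing of the $k\geqslant s$ contributions, handled by the rewriting $XE_kX^\transpose=(XE_k)(XE_k)^\transpose$ together with Corollary~\ref{cor:F-norm}. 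Note that coordinate invariance is not needed here, since we always trace the last factor.
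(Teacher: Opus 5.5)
Your proposal is correct and follows the paper's proof essentially step for step. You reduce to a single $\Tr_1$ via \eqref{eq:successive-partial-traces}, rewrite $\Tr_1(F_sF_s^\transpose)$ as $G_{s-1}(E_1\circ E_s)G_{s-1}^\transpose$ with $G_{s-1}=E_1^{\otimes(s-1)}D_{s-1}$, kill the terms with $k\geqslant s$ because $(G_{s-1}E_k)^\transpose(G_{s-1}E_k)=c^{(s-1)}_kE_k=0$, and identify the constant via \eqref{eq:alpha-recursion} and Proposition~\ref{prop:Krein}. The only cosmetic differences are that you absorb the case $r=0$ into the telescoping through the $s=1$ step, and you discard the $k<s-1$ terms by tridiagonality rather than by the triangle property.
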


\begin{proof}
The cases $r=t$ and $r=0$ follow from the definition of partial trace and $\Tr(\mathsf P_t)=m_t$.
It suffices to prove, for each $t \in \{2,3,\dots,d\}$, that
\[
 \Tr_1(\mathsf P_t)
 =\frac{m_t}{m_{t-1}}\mathsf P_{t-1}
\]
since the general case follows by iterating
\eqref{eq:successive-partial-traces}.

Fix $2\leqslant t\leqslant d$ and let $r=t-1$, and set $G_r:=E_1^{\otimes r}D_r$.
Then $F_r = G_rE_r$. 
By Lemma~\ref{lem:entrywise-product},
\[
 (G_rE_k)^\transpose(G_rE_k)
 =E_k(E_1^{\circ r})E_k
 =c_k^{(r)}E_k.
\]
By Corollary~\ref{cor:F-norm}, we have $G_rE_k=0$ whenever $k>r$.
For $\mathbf x,\mathbf y\in\XX^r$, expanding $F_t$ and using the idempotency and symmetry of $E_1$ and $E_t$ yields
\begin{align*}
 \bigl[\Tr_1(F_tF_t^\transpose)\bigr]_{\mathbf x,\mathbf y}
 &=
 \sum_{u,w\in\XX}
 [G_r]_{\mathbf x,u}[G_r]_{\mathbf y,w}[E_t]_{u,w}
 \sum_{z\in\XX}[E_1]_{z,u}[E_1]_{z,w}\\
 &=
 \bigl[G_r(E_1\circ E_t)G_r^\transpose\bigr]_{\mathbf x,\mathbf y}.
\end{align*}

By Proposition~\ref{prop:triangle},
$q_{1t}^k=0$ for $k<r$, since $r=t-1=|1-t|$.
Since $G_rE_k=0$ for $k>r$, we can use \eqref{eq:Krein-expansion} to obtain
\[
 G_r(E_1\circ E_t)G_r^\transpose
 =\frac{q_{1t}^r}{v}G_rE_rG_r^\transpose
 =\frac{q_{1t}^r}{v}F_rF_r^\transpose =\frac{\alpha_r q_{1t}^r}{v}\mathsf P_r.
\]
Finally, we apply \eqref{eq:alpha-recursion} and
Proposition~\ref{prop:Krein} to deduce
\[
 \Tr_1(\mathsf P_t) = \frac{1}{\alpha_t}\Tr_1(F_tF_t^\transpose)
 =
 \frac{\alpha_r q_{1t}^r}{v\alpha_t}\mathsf P_r
=
 \frac{m_t}{m_r}\mathsf P_r,
\]
as required.
\end{proof}

\subsection{Density matrices and entropy}

A \textbf{density matrix} is a positive semidefinite matrix of trace one. 
Now we show that the partial trace of a density matrix is again a density matrix.

\begin{lemma}\label{lem:partial-trace-basic}
Let $\mathfrak X$ and $\mathfrak Y$ be finite sets and let $K\in\Mat_{\mathfrak X\times\mathfrak Y}(\R)$.
If $K$ is positive semidefinite, then $\Tr_{\mathfrak Y}(K)$ is positive semidefinite. 
Moreover,
\[
 \Tr\bigl(\Tr_{\mathfrak Y}(K)\bigr)=\Tr(K).
\]
\end{lemma}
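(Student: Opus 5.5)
The plan is to prove both assertions directly from the block-matrix definition \eqref{eq:partial-trace-general}, writing $K=[K_{xy}]_{x,y\in\mathfrak X}$ with blocks $K_{xy}\in\Mat_{\mathfrak Y}(\R)$.

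The trace identity is immediate. The ordinary trace of $K$ sums the diagonal entries $K_{(x,z),(x,z)}$ over all $x\in\mathfrak X$ and $z\in\mathfrak Y$. Grouping this sum by $x$ gives
\[
 \Tr(K)=\sum_{x\in\mathfrak X}\Tr(K_{xx})=\Tr\bigl(\Tr_{\mathfrak Y}(K)\bigr).
\]
This step needs no hypothesis on $K$.

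For positive semidefiniteness, I first note that $\Tr_{\mathfrak Y}(K)$ is symmetric. Indeed, $K^\transpose=K$ gives $K_{yx}=K_{xy}^\transpose$, and hence $\Tr(K_{yx})=\Tr(K_{xy})$.

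Next I fix a vector $u\in\R^{\mathfrak X}$. For each $z\in\mathfrak Y$, let $e_z$ be the standard basis vector of $\R^{\mathfrak Y}$, and form $u\otimes e_z\in\R^{\mathfrak X\times\mathfrak Y}$. Its $(x,w)$-entry is $u_x\delta_{wz}$. Expanding the quadratic form gives
\[
 (u\otimes e_z)^\transpose K(u\otimes e_z)=\sum_{x,y\in\mathfrak X}u_xu_y K_{(x,z),(y,z)}.
\]
Summing over $z\in\mathfrak Y$ then gives
\[
 u^\transpose\,\Tr_{\mathfrak Y}(K)\,u=\sum_{z\in\mathfrak Y}(u\otimes e_z)^\transpose K(u\otimes e_z).
\]
Each term on the right is non-negative because $K$ is positive semidefinite, so the left-hand side is non-negative.

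An equivalent route is to write $K=B^\transpose B$. Then $\Tr_{\mathfrak Y}(K)$ becomes a sum of Gram-type matrices, one for each column index in $\mathfrak Y$. I prefer the quadratic-form computation above because it is more direct.

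I expect no real obstacle here. The only point needing care is the index bookkeeping: the vector $u\otimes e_z$ must be indexed consistently with the ordering $\mathfrak X\times\mathfrak Y$ used to define the blocks $K_{xy}$. Once that convention is fixed, both claims follow by regrouping finite sums.
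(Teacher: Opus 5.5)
Your proposal is correct and follows essentially the same route as the paper: the trace identity by regrouping the diagonal sum, and positive semidefiniteness via $\mathbf u^\transpose\Tr_{\mathfrak Y}(K)\mathbf u=\sum_{z\in\mathfrak Y}(\mathbf u\otimes\mathbf e_z)^\transpose K(\mathbf u\otimes\mathbf e_z)$. Your explicit check that $\Tr_{\mathfrak Y}(K)$ is symmetric is a small extra detail that the paper leaves implicit.
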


\begin{proof}
Let $\{\mathbf e_z\; : \; z\in\mathfrak Y\}$ be the standard basis of $\R^{\mathfrak Y}$. 
For $\mathbf u\in\R^{\mathfrak X}$, \eqref{eq:partial-trace-general} implies
\[
 \mathbf u^\transpose\Tr_{\mathfrak Y}(K)\mathbf u
 =
 \sum_{z\in\mathfrak Y}
 (\mathbf u\otimes\mathbf e_z)^\transpose
 K(\mathbf u\otimes\mathbf e_z),
\]
which is non-negative when $K$ is positive semidefinite.
The trace identity follows since
\[
 \Tr\bigl(\Tr_{\mathfrak Y}(K)\bigr)
 =
 \sum_{x\in\mathfrak X}\sum_{z\in\mathfrak Y}
 K_{(x,z),(x,z)}
 =
 \Tr(K). \qedhere
\]
\end{proof}

If $M$ is a density matrix with eigenvalues $\lambda_1,\ldots,\lambda_n$, its \textbf{von Neumann entropy} is defined as
\[
 h(M):=-\sum_{i=1}^n\lambda_i\log\lambda_i,
\]
where $\log$ denotes the natural logarithm and $0\log0$ is interpreted as zero. 
Since each eigenvalue $\lambda_i$ satisfies $0\leqslant\lambda_i\leqslant1$, the entropy $h(M)$ is a non-negative real number.

\begin{proof}[Proof of Theorem~\ref{thm:main}]
Let $a,b,c$ be non-negative integers with $t:=a+b+c\leqslant d$.
The case $t=0$ is immediate, so suppose that $t\geqslant1$.
Set
\[
 \mathfrak X_1:=\mathfrak X^a,\qquad
 \mathfrak X_2:=\mathfrak X^b,\qquad
 \mathfrak X_3:=\mathfrak X^c.
\]
We consider $\mathsf P_t$ as a matrix with rows and columns indexed by $\mathfrak X_1\times\mathfrak X_2\times\mathfrak X_3$ under its natural bijection with $\mathfrak X^t$.

For $J\subseteq\{1,2,3\}$, write
$\overline{J}:=\{1,2,3\}\setminus J$ and
\[
 \mathfrak X_J:=\prod_{j\in J}\mathfrak X_j,
\]
where the factors $\mathfrak X_j$ occur in their natural order and $\mathfrak X_\emptyset$ is a singleton.
Observe that $m_t^{-1}\mathsf P_t$ is a density matrix.
Define
\[
 S_J:=h\bigl(\Tr_{\overline{J}}(m_t^{-1}\mathsf P_t)\bigr).
\]
Lieb and Ruskai (see \cite[Theorem~2(i),(ii)]{LiebRuskai}
and \cite[(3.1) and (3.2)]{Pippenger}) showed that
\begin{align}
 S_{\{1,2,3\}}+S_{\{2\}}
 &\leqslant S_{\{1,2\}}+S_{\{2,3\}},
 \label{eq:strong-subadditivity}\\
 S_{\{2\}}+S_{\{3\}}
 &\leqslant S_{\{1,2\}}+S_{\{1,3\}}.
 \label{eq:weak-monotonicity}
\end{align}

Fix $J\subseteq\{1,2,3\}$, and let $j$ be the total number of individual coordinates in the blocks indexed by $J$.
Move these blocks to the front, preserving their natural order and the coordinate order within each block, followed by the
remaining blocks in their natural order.
This is a permutation of the $t$ individual coordinates, under which $\mathsf P_t$ is unchanged by Lemma~\ref{lem:Ptinv}.

Then identifying $\mathfrak X_J$ with $\mathfrak X^j$ by concatenation, the definition of partial trace and
Theorem~\ref{thm:compatible-projections} together yield
\[
 \Tr_{\overline{J}}(m_t^{-1}\mathsf P_t)
 =\Tr_{t-j}(m_t^{-1}\mathsf P_t)
 =m_j^{-1}\mathsf P_j.
\]
This reduced density matrix has exactly $m_j$ non-zero eigenvalues, all equal to $1/m_j$.
Hence, $S_J=\log m_j$.

Substitute the expressions for $S_J$ into \eqref{eq:strong-subadditivity} and \eqref{eq:weak-monotonicity} to obtain
\begin{align*}
 \log m_{a+b+c}+\log m_b
 &\leqslant\log m_{a+b}+\log m_{b+c},\\
 \log m_b+\log m_c
 &\leqslant\log m_{a+b}+\log m_{a+c}.
\end{align*}
Exponentiating completes the proof.
\end{proof}

\section*{Acknowledgment}

ChatGPT 5.6 led us to the entropy inequalities of Lieb and Ruskai.
AI tools subsequently assisted with proofreading.
All AI-assisted outputs were independently checked, verified, and simplified by the authors, who assume full responsibility for the content of this work.

\end{document}